\documentclass[12pt,a4paper]{article}

\usepackage{tocbibind}
\usepackage{preamble}

 \usepackage{vmargin}
 \setmargrb{1in}{0.7in}{1in}{0.7in} 

\title{Cactus barriers}
\author{\JaBu}

\date{11th February 2026}

\newenvironment{red}{\color{red}}{}
\newcommand{\bred}{\begin{red}}
\newcommand{\ered}{\end{red}}

\newenvironment{blue}{\color{blue}}{}
\newcommand{\bblue}{\begin{blue}}
\newcommand{\eblue}{\end{blue}}

\newenvironment{green}{\color{green}}{}
\newcommand{\bgreen}{\begin{green}}
\newcommand{\egreen}{\end{green}}

\usepackage[textsize=tiny]{todonotes}

\newcommand{\borderrank}[2][X]{\operatorname{br}_{#1}(#2)}
\newcommand{\cactusrank}[2][X]{\operatorname{cr}_{#1}(#2)}
\newcommand{\bordercactusrank}[2][X]{\operatorname{bcr}_{#1}(#2)}

\newcommand{\cactusop}[2]{\mathfrak{K}^{\circ}_{#1}\left( #2 \right)}

\begin{document}
\maketitle
\begin{abstract}
   Determinantal methods for bounding the rank and border rank of tensors or polynomials are subject to a major barrier. 
   For instance, it is known that using determinantal method one cannot prove a lower bound for the border rank of tensor in
   $\CC^m \otimes \CC^m \otimes \CC^m$  that exceeds $6m-4$.
   We explain the precise geometric reason for this number (and analogous bounds in more general tensor spaces) using cactus varieties and, more generally, scheme theoretic methods in algebraic geometry.
\end{abstract}

\medskip
{\footnotesize
\noindent\textbf{addresses:} \\
J.~Buczy\'nski, \eemail{jabuczyn@impan.pl}, 
   Institute of Mathematics of the Polish Academy of Sciences, ul. \'Sniadeckich 8, 00-656 Warsaw, Poland

\noindent\textbf{keywords:}\\
border rank, rank methods, cactus varieties, barriers, finite schemes

\noindent\textbf{AMS Mathematical Subject Classification 2020:}\\
Primary: 14N07; Secondary: 03D15, 14A15, 15A69
}

\section{Introduction}

Throughout the article $\kk$ is
an algebraically closed base field of any characteristic.

\subsection{Linear rank methods}
\label{sec_intro_linear_rank_methods}

The determinant of a square matrix, and more generally, the minors of a given order of any matrix $M$, are famously useful invariants throughout mathematics.
In the context of this article their relevance is given by the relation to a rank of $M$ and to bounds on tensor-type ranks.

In many areas of science, one frequently seeks to decompose a complicated state as a combination of simple ones.
More mathematically, given a vector space $W$ (the set of all potentially possible states, including the complicated ones) and its subset $\hat{X}\subset W$ of simple states we want to write an element $F\in W$ as a linear combination of vectors from $\hat X$ using as few such vectors as possible.
The minimal number of these elements is called $X$-rank, and it witnesses how complicated $F$ is.

There is a classical method of bounding the $X$-rank (and a related notion of $X$-border rank) of $F$ that involves a matrix $M$ of linear forms on $W$.
For each $w\in W$ we can evaluate each entry of $M$ at $w$ and obtain a matrix $M(w)$ whose coefficients are in $\kk$.
For each integer $k$ we can define the loci $\set{\rk M\leqslant k}=\set{w\in W \mid \rk M(w) \leqslant k}$.
Now suppose $k:= \max\set{ \rk(M(x)) \mid x\in \hat{X}}$, so that $\hat X\subset \set{\rk M\leqslant k}$.
If the $X$-rank of $F$ is $r$, then $\rk(M(F))\leqslant k \cdot r$, or vice versa, if $\rk(M(F))> k \cdot r$ for some integer $r$, then the $X$-rank of $F$ must be at least $r+1$.
Thus each matrix $M$ of linear forms gives us a lower bound for $X$-rank (and in fact also for the $X$-border rank).
These types of lower bounds are called in the literature ``rank methods for complexity'', see for instance \cite{efremenko_garg_oliveira_wigderson_barriers_for_rank_methods}.
In view of \cite{dolezalek_michalek_nonlinear_methods_for_tensors_det_eq_for_secants}
 in this paper we will call them \emph{linear rank methods} instead.

Since varying the matrix $M$ one gets different lower bounds,
a considerable effort has been devoted to finding the optimal matrix and bound for a given problem, see \cite{landsberg_ottaviani_VB_method_equ_for_secants} for an overview.
However, at some point the rank methods stop being effective,
and eventually barriers for the tensor cases were proven by \cite{efremenko_garg_oliveira_wigderson_barriers_for_rank_methods},
and \cite{garg_makam_oliveira_wigderson_more_barriers_for_rank_methods}.

\subsection{Border and cactus ranks}
\label{sec_intro_borders_and_cacti}

In this  article we present the geometric explanation for the barriers,
and generalise it to any other setting outside  the tensor world.
This geometric explanation has the name \emph{cactus variety} and our arguments build on earlier work, including \cite{nisiabu_jabu_cactus},
\cite{bernardi_ranestad_cactus_rank_of_cubics},
\cite{galazka_mgr_publ}
\cite{galazka_vb_cactus},
\cite{galazka_phd}.

In order to achieve our goal we move to the world of algebraic geometry and formalise the setting.
In Appendix~\ref{sec_appendix} we explain why doing this we do not lose any generality compared to the situation outlined above.

From the perspective of algebraic geometry it is more convenient to work in the projective space $\PP(W) = (W\setminus\set{0})/\kk^*$ instead of the vector space $W$.
On $\PP(W)$ we consider the Zariski topology.
For a subset (or subscheme) $R\subset \PP(W)$ by $\linspan{R}\subset \PP (W)$ we denote the projective linear span of $R$, that is the smallest projective linear subspace of $\PP(W)$ that contains $R$.
Equivalently, if $I(R)$ is the homogeneous ideal of all the functions vanishing on $R$,
then $\linspan{R}= Z(I(R)_1)$.

Suppose $X\subset \PP(W)$ is a locally closed subvariety.
We assume that $\linspan{X}=\PP(W)$.
For each positive integer $r$ define the following subsets of $\PP(W)$:
\begin{subequations}
\begin{align}
   \sigma^{\circ}_r(X)&:= \bigcup\set{\linspan{\fromto{x_1}{x_r}}  \mid x_i\in X}, \\
   \sigma_r(X)&:=\overline{\sigma^{\circ}_r(X)},\\
   \cactusop{r}{X}&:= \bigcup\set{\linspan{R}  \mid R \text{ is a finite subscheme of $X$ of degree } r},
   \label{equ_define_plain_cactus}
   \\
   \cactus{r}{X}&:=\overline{\cactusop{r}{X}}.
   \label{equ_define_cactus}
\end{align}
\end{subequations}
These sets allow us to define, respectively, the $X$-rank, $X$-border rank, $X$-cactus rank, $X$-border cactus rank.
\begin{defin}\label{def_ranks}
   With $X$ as above,
     for any $F\in \PP(W)$ define:
   \begin{itemize}
    \item the $X$\emph{-rank} of $F$:
    $r_X(F):= \min\set {r\in \NN \mid F\in \sigma^{\circ}_r(X)}$,
    \item the $X$\emph{-border rank} of $F$:
    $\borderrank{F}:= \min\set {r\in \NN \mid F\in \sigma_r(X)}$,
    \item the $X$\emph{-cactus rank} of $F$:
    $\cactusrank{F}:= \min\set{r\in \NN \mid F\in \cactusop{r}{X}}$,
    \item the $X$\emph{-border cactus rank} of $F$:
    $\bordercactusrank{F}:= \min\set{r\in \NN \mid F\in \cactus{r}{X}}$.
   \end{itemize}
   If $F\in W$ instead, and $[F]\in \PP(W)$ is its projective class,
   then we define $X$-rank of $F$ to be $r_X([F])$, and analogously for all the border and cactus variants.
   By convention, $r_X(F)$ (and $\borderrank{F}$, $\cactusrank{F}$, $\bordercactusrank{F}$)
   is $0$ if and only if $F=0\in W$, and it is never $0$ for $F\in \PP(W)$.
\end{defin}

We always have
$r_X(F) \geqslant \borderrank{F} \geqslant \bordercactusrank{F}$
and
$r_X(F) \geqslant \cactusrank{F} \geqslant \bordercactusrank{F}$.
Moreover the cactus variants of ranks are typically much smaller than the corresponding $X$-rank or $X$-border rank.
The main result of this article is that in cases of interest the linear rank methods outlined in Subsection~\ref{sec_intro_linear_rank_methods} can at best detect
$\bordercactusrank{F}$, and never $\borderrank{F}$ (unless when they coincide).

\subsection{Barriers in terms of cacti}

Let $X_0\subset X$ be the smooth locus of $X$ and let $ \overline{X}\subset \PP(W)$ be the closure of $X$.
In most cases of interest (such as Segre-Veronese, or Grassmannian varieties) $X$ is itself smooth and projective and thus $X_0=X=\overline{X}$.
Suppose further $M$ is a matrix of linear forms on~$W$.
By slight abuse of notation, we write $\set{\rk M \leqslant k} \subset \PP(W)$, the rank locus, also in the projective space.

In the notation of Subsection~\ref{sec_intro_borders_and_cacti},
the linear rank method outlined in Subsection~\ref{sec_intro_linear_rank_methods} can be rephrased as the following statements:
if $X\subset \set{\rk M \leqslant k}$ then all of the following statements hold:
\begin{align*}
   \forall_{F\in W} \ r_X(F) & \geqslant \frac{\rk M(F)}{k}, &
   \forall_{r\in \NN} \ \sigma^{\circ}_r(X)  & \subset \set{\rk M \leqslant k\cdot r},
   \\
\forall_{F\in W} \ \borderrank{F} & \geqslant \frac{\rk M(F)}{k},
&
   \forall_{r\in \NN} \ \sigma_r(X)  & \subset \set{\rk M \leqslant k\cdot r}.
\end{align*}

\begin{thm}[Cactus barrier for linear rank method]
   \label{thm_cactus_barriers_intro}
   With variety $X$ and matrix $M$ as above, if in addition $X\subset \set{\rk M \leqslant k}$
   then $\cactus{r}{X_0} \subset \set{\rk M \leqslant k \cdot r}$.
   In particular, if $g$ is an integer such that
   $\cactus{g}{X_0} = \PP(W)$ (for instance, $g$ is the generic $X_0$-cactus rank), then the linear rank methods will never provide a better lower bound on $X$-border rank than $g$.
\end{thm}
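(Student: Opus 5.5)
Since $\set{\rk M\leqslant k r}$ is Zariski closed (cut out by the $(kr+1)$-minors of $M$), it suffices to show $\linspan{R}\subset\set{\rk M\leqslant kr}$ for every finite subscheme $R\subset X_0$ of degree $r$; taking the union and the closure then gives $\cactus{r}{X_0}\subset\set{\rk M\leqslant kr}$. The second sentence of the theorem follows at once. Suppose $\cactus{g}{X_0}=\PP(W)$ and some $F$ has $\rk M(F)>kr$ for an $r\geqslant g$. Then $F\in\cactus{g}{X_0}\subset\cactus{r}{X_0}$ (add reduced points), and the inclusion just proved gives $\rk M(F)\leqslant kr$, a contradiction. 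So the lower bound $r+1$ produced by the method never exceeds $g$.

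For the key inclusion, I would first reduce to local schemes. A finite scheme $R$ is the disjoint union of local schemes $R_i$ supported at points $x_i$, with $\sum\deg R_i=r$ and $\linspan{R}=\linspan{\bigcup R_i}$. Since ranks of matrices are subadditive on sums of vectors, it is enough to prove that $\linspan{R_i}$ lies in the affine cone locus $\set{\rk M\leqslant k\deg R_i}$. Now fix a local $R$ of degree $r$ supported at a smooth point $x\in X_0$. Think of $M$ as a linear map $\phi\colon W\to\operatorname{Hom}(A,B)$. The span of $R$ is the image of the dual of the restriction $H^0(\mathcal{O}(1))\to H^0(\mathcal{O}_R(1))$, i.e.\ a subspace $\hat{\linspan{R}}\cong H^0(\mathcal O_R)^*\subset W$. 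Composing with $\phi$ gives a family of matrices parametrised by $\mathcal O_R^*$.

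The key step is to use the hypothesis $X\subset\set{\rk M\leqslant k}$ scheme-theoretically. The $(k+1)$-minors of $M$ vanish on $X$, hence vanish on $R$ as a subscheme. Near $x$, with $X_0$ smooth, I would choose a local trivialisation and view $M|_R$ as a matrix $\tilde M$ with entries in the Artinian ring $\mathcal O_R$, all of whose $(k+1)$-minors are zero. The desired claim is that the induced $\kk$-linear map $\mathcal O_R^*\otimes A\to B$, namely $\ell\otimes a\mapsto \ell(\tilde M a)$, has rank at most $k\cdot\dim_\kk\mathcal O_R$. Equivalently, the $\mathcal O_R$-submodule $N\subset B\otimes\mathcal O_R$ generated by the columns of $\tilde M$ should satisfy $\dim_\kk N\leqslant k r$. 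Vanishing $(k+1)$-minors over a local ring do not by themselves make $N$ generated by $k$ elements; this is exactly where smoothness must enter.

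This is the main obstacle, and it is where I would use $X_0$ and a deformation argument. Since $x$ is a smooth point, $R$ embeds in $\mathbb A^n\cong$ a neighbourhood of $x$ in $X_0$. I would first try to show that $R$ is a flat limit of degree-$r$ schemes inside $X_0$ that are smoothable, or at least whose spans converge. If smoothability fails, the plan is to instead argue directly on the smooth germ. On the local ring $\mathcal O_{X_0,x}$, which is regular, and more usefully on a generic point of the component, $\tilde M$ has rank $\leqslant k$. So after localising at the generic point, the column module of $M|_{X_0}$ is a torsion-free module $\mathcal N\subset B\otimes\mathcal O_{X_0}$ of rank $\leqslant k$. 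Note that $N$ is the image of $\mathcal N\otimes\mathcal O_R$, but a rank-$k$ module need not be generated by $k$ elements at a singular point of $\mathcal N$. My first attempt to get around this would be to replace $\mathcal N$ by a vector bundle of rank $k$ containing it, via reflexive hull on a surface, or more generally an Eagon–Northcott or Fitting-ideal argument bounding the length of $\mathcal N\otimes\mathcal O_R$ by $k\cdot\operatorname{length}(R)$. Alternatively, I would invoke the paper's later cited results (Buczyńska–Buczyński, Gałązka's vector bundle method) that equations given by minors of vector-bundle-induced matrices vanish on cactus varieties, treating $M$ as induced from the bundle $\mathcal N$ on $X_0$.
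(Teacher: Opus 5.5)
Your reduction to spans of finite subschemes, the splitting of $R$ into local pieces by subadditivity of rank, and the derivation of the ``in particular'' part are fine. You also locate the obstacle correctly: at a point $x$ where $\rk M$ drops, the column sheaf $\mathcal{N}\subset B\otimes\ccO_{X_0}$ is not locally free, and vanishing of $(k+1)$-minors on $R$ gives nothing. But the proposal stops there. The bound $\dim_\kk N\leqslant k\deg R$ (which would suffice) is never proved, and the remedies you list do not work.

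\begin{itemize}
\item Smoothability cannot be the route, since finite subschemes of smooth varieties of dimension $\geqslant 3$ need not be smoothable.
\item For the module-theoretic ideas, take $X=\PP^3$ and $M$ the $3\times 3$ matrix whose columns are the Koszul syzygies $(x_2,-x_1,0)$, $(x_3,0,-x_1)$, $(0,x_3,-x_2)$. Then $k=2$ and $M$ vanishes at $x=[1:0:0:0]$. Near $x$ the sheaf $\mathcal{N}$ is the syzygy module of the maximal ideal: saturated in $B\otimes\ccO$, reflexive, and needing $3$ generators.
\item Hence $\mathcal{N}\otimes\ccO_R$ has length $3>k\deg R$ already for $R=\set{x}$. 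So bounding that length is the wrong target: only its image $N\subset B\otimes\ccO_R$ is small, and here $N=0$.
\item In the same example no rank-$2$ bundle sits between $\mathcal{N}$ and $B\otimes\ccO$, so the reflexive-hull idea does not extend beyond surfaces.
\item Ga{\l}{\k a}zka's results need $M$ to come from a vector bundle, or $X\subset\set{\rk M=k}$. That is exactly what fails near $x$.
\end{itemize}

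The missing idea is to move $R$ rather than analyse it in place. On a component of $X_0$, let $U$ be the dense open subset where $\rk M$ attains its maximum $k'\leqslant k$. On $U$ the image of $M$ is a rank-$k'$ subbundle $\ccE$. So for finite $R'\subset U$ your module is $N'=\ccE|_{R'}\simeq\ccO_{R'}^{\oplus k'}$ by Lemma~\ref{lem_bundles_are_trivial}, and $\dim_\kk N'=k'\deg R'$. This is the content of Theorem~\ref{thm_determinantal_cacti}, phrased there as $\linspan{R'}\subset\PP\bigl(g^{-1}(A\otimes B')\bigr)$ with $\dim B'\leqslant k'\deg R'$.

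Smoothness is used exactly once. By Proposition~\ref{prop_moving_scheme_into_generic_position} (\'etale coordinates at the support of $R$ plus translation), $R$ is the special fibre of a flat family $\ccR\to T$ over a curve whose fibres over $t\neq 0$ lie in $U$. These fibres are \'etale translates of $R$, so no smoothing is involved.

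Then pass to the limit. After trivialising $\ccO(1)$ near $R$, the map $A\otimes\ccO_{\ccR}\to B\otimes\ccO_{\ccR}$ pushes forward to a map of locally free sheaves on $T$ whose fibrewise rank is $\dim_\kk N_t$. Lower semicontinuity of rank then gives $\dim_\kk N\leqslant k'\deg R\leqslant k\deg R$. The paper phrases this step as ``the span of a flat limit lies in the limit of the spans'', combined with closedness of $\set{\rk M\leqslant kr}$; this is how it proves Theorem~\ref{thm_generic_cactus_and_smooth_locus}.

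Without smoothness this moving step is unavailable. That is why the theorem is stated for $X_0$, and why it fails for singular $X$ (Ga{\l}{\k a}zka's example).
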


The exact value of $X$-generic rank (thus the maximal possible $X$-border rank) is known when $X$ is a Veronese variety (the famous Alexander-Hirschowitz Theorem \cite{alexander_hirschowitz_polynomial_interpolation}, \cite{brambilla_ottaviani_on_AH_theorem}), some families of Segre varieties \cite{abo_ottaviani_peterson_segre}, two factor Segre-Veronese varieties \cite{abo_brambilla_galuppi_oneto_nondef_of_Segre_Veronese}, \cite{dolezalek_ken_secant_vars_Segre_Veronese_O_1_2_nondef} and in further sporadic cases.
Moreover, it is straightforward to notice
that it is at least $\frac{\dim W}{\dim X+1}$.
On the other hand, generic cactus rank is rarely known, and usually much lower than $\frac{\dim W}{\dim X+1}$.
For instance, for the Segre embedding
$\PP^{a-1}\times\PP^{b-1}\times \PP^{c-1}\subset\PP^{abc-1}$,
it is known that for $g=2(a+b+c-2)$ the $g$-th cactus variety $\cactus{g}{X} = \PP^{abc-1}$, while we need at least $\frac{abc}{a+b+c-2}$-th secant variety to fill in $\PP^{abc-1}$.
Thus for large values of $a$, $b$, $c$, the gap between the border rank of most tensors and the best possible lower bound that we can obtain using linear rank methods is huge.
Analogous gaps show up in all the other tensor spaces provided the dimension of $X$ is sufficiently large and the embedding is sufficiently involved.

Combining Theorem~\ref{thm_cactus_barriers_intro}
with known results about generic cactus rank \cite[Cor.~1.7]{galazka_mgr_publ} (and earlier work)
recovers the asymptotic barriers for linear rank methods \cite[Thms~1 and 2]{efremenko_garg_oliveira_wigderson_barriers_for_rank_methods}, \cite[Thm.~8.4, Cor.~8.5]{garg_makam_oliveira_wigderson_more_barriers_for_rank_methods}.

\subsection{Prior work}
Weaker versions
of Theorem~\ref{thm_cactus_barriers_intro}
appeared previously since the introduction of cactus varieties.
In particular, in \cite[Prop.~3.6]{nisiabu_jabu_cactus}
the claim is proven for $M$ arising
as a catalecticant matrix (over complex numbers).
Later \cite[Thm~5]{galazka_vb_cactus} proved the barrier whenever $M$ arises
from so called vector bundle construction \cite[Constr.~2]{galazka_vb_cactus}.
Finally,
\cite[Thm~1.18]{galazka_phd} presents the statement whenever $X$ is contained in the constant rank locus,
attributing it to the author of this article.

\subsection{Strengthening of barriers}

As consequences of results of this paper we outline further possibilities of research direction.
Firstly, so far all the lower bounds on the border rank that were discovered seem to also be bounds on border cactus rank, with two notable exceptions discussed in Subsection~\ref{sec_barrier_breaking}.
That is, cactus barriers probably apply to much wider class of methods, than just linear rank methods.
For instance, the flag conditions
\cite[Cor.~2.3]{landsberg_michalek_abelian_tensors},
\cite[Prop.~2.3 and 2.5]{conner_huang_landsberg_bad_and_good_news}
are not in any obvious way related to the rank method,
but, again, the lower bounds for border rank obtained in this way are also lower bounds for border cactus rank.
Analogously,
the lower bound on Waring rank \cite[Prop.~4]{jabu_teitler_examples_of_forms_of_high_rank}
is also subject to the cactus barriers. Indeed,
the lower bound is never higher than the length of an apolar scheme outside of a specific hyperplane by \cite[Rem.~6]{jabu_teitler_examples_of_forms_of_high_rank}.
Furthermore, \cite[Thm~3]{bernardi_ranestad_cactus_rank_of_cubics} construct such a scheme of relatively small length and this is precisely the upper bound for generic cactus rank.
Also Weak Border Apolarity
\cite[Thm~1.2]{nisiabu_jabu_border_apolarity}, which proved to be more effective than many earlier rank based lower bounds,
is subject to the barriers by the results of
\cite[Thm~1.2]{nisiabu_jabu_abcd}.

Thus an important open problem is to determine whether all other existing methods for bounding rank or border rank are subject to some versions of the cactus barrier.

Another direction would be strengthening the cactus bounds. Focusing on tensors, or more generally on $X=\PP(A) \times Y \subset \PP(A\otimes V)$ with $Y\subset \PP(V)$
there is a well known correspondence between
secant varieties of $X$ and Grassmann secant varieties $\sigma_{r,i}(Y)$ of $Y$.
See \cite{chiantini_coppens_Grassmannians_of_secant_varieties} or \cite{landsberg_jabu_ranks_of_tensors} or Section~\ref{sec_cacti} for definition of Grassmann secant varieties.
In particular,
if  $\dim A\leqslant \dim V$,
$\sigma_r(X)=\PP(A\otimes V)$
if and only if $r\geqslant\dim A$ and $\sigma_{r,\dim A}(Y)=\Gr(\dim A, V)$.
However, this correspondence does not generalise to Grassmann cactus varieties
(Definition~\ref{def_Grassmann_cactus_variety}).
If $\cactus{r}{X}=\PP(A\otimes V)$,
  then  $\cactus{r,\dim A}{Y}=\Gr(\dim A, V)$,
  but not necessarily vice versa.
For instance, if $\dim A = m$
and $Y=\PP^{m-1}\times \PP^{m-1} \subset \PP(\kk^{m}\otimes \kk^{m})$,
so that
\[
   X = \PP^{m-1}\times \PP^{m-1}\times \PP^{m-1} \subset \PP(\kk^{m}\otimes \kk^{m}\otimes \kk^m)
\]
 then we know that
$\cactus{g_1}{X}=\PP(A\otimes V)$ for $g_1=6m-4$, while  $\cactus{g_2,m}{Y}=\Gr(m, V)$ already for $g_2= 3m-1$.
It seems that $3m-1$ is closer to the best lower bounds for tensor rank known for any family of large tensors \cite{landsberg_michalek_hay_in_haystack}.
This suggests the question if there is any version of the barriers that involves Grassmann cactus variety instead of cactus variety.
With this purpose in mind we prove some of our intermediate results in the more general setting of Grassmann cactus varieties.

\subsection{Barrier breaking}
\label{sec_barrier_breaking}

Breaking the cactus barrier---that is, finding a method or bound for border rank that exceeds the analogous bound on border cactus rank---is strictly related to the problem of smoothability of finite schemes,
see \cite{jelisiejew_PhD} for an overview, or more generally with problems in deformation theory.
In particular, any result on smoothability should imply a corresponding result on breaking the barrier.
Conversely, any barrier breaking has consequences in deformation theory.

To the author's knowledge there are currently only two results breaking the cactus barrier, both groundbreaking.
The first one \cite{galazka_mandziuk_rupniewski_distinguishing}
exploits the known facts on smoothability and translates them to the language of tensors.
The second one \cite{dolezalek_michalek_nonlinear_methods_for_tensors_det_eq_for_secants}
does not rely on deformation theory, instead hopefully in the future it will bring new information about smoothability of finite schemes.

\subsection*{Acknowledgements}
I am sincerely grateful to
Alessandra Bernardi,
Weronika Buczy{\'n}ska,
Klim Efremenko,
Maciej Ga{\l}{\k a}zka,
Joachim Jelisiejew,
Joseph Landsberg,
Visu Makam,
Mateusz Micha{\l}ek,
Kristian Ranestad, and
Zach Teitler
  for many inspiring fruitful discussions and helpful comments.
The author is supported by the National Science Center, Poland, project ``Advanced problems in contact manifolds, secant varieties, and their generalisations (APRICOTS+)'', 2023/51/B/ST1/02799.

\section{Finite subschemes}\label{sec_finite_schemes}

In this section we gather facts about schemes that are needed to prove our main result.
We denote these schemes by $R$, perhaps with some decoration (for instance, $R'$, or $R_1$, $R_2$,\dots, or $R_t$ when we consider a family of schemes parametrised by $t\in T$ for some base $T$).
Our main interest is in the case $R$ is a finite scheme over $\kk$ and has a fixed embedding into a projective space $\PP(W)$. Some statements cover a more general setting.

Suppose $X$ and $T$ are schemes over $\kk$.
By a \emph{flat family of subschemes} of $X$ parameterised by $T$ we mean a closed subscheme $\ccR \subset X\times T$ such that the projection $\pi\colon \ccR \to T$ is flat and surjective.
In this setting for each $\kk$-point $t\in T$
we denote by $R_t\subset X$ the fibre $\pi^{-1}(t)$.
We frequently distinguish a special $\kk$-point $0\in T$, and consider $R_0$ to be the special fibre.
By \emph{curve} we mean a smooth quasiprojective integral scheme over $\kk$ of dimension $1$.

The first claim is that on a smooth variety we can deform any finite subscheme into any open dense subset.
It appears to be a standard folklore result for which we have not found a reference.

\begin{prop}
   \label{prop_moving_scheme_into_generic_position}
   Suppose $X$ is a smooth quasiprojecive scheme over $\kk$, $U\subset X$ is an open dense subset,
   and $R\subset X$ is a finite subscheme.
   Then there exists a curve $T$, a $\kk$-point $0\in T$ and a finite flat family $\ccR \to T$ of subschemes of $X$ such that $R_0=R$ and for all $t\in T\setminus {0}$ we have $R_t\subset U$.
\end{prop}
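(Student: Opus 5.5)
Since a finite scheme is a disjoint union of finitely many local pieces, I will reduce to the case $R$ supported at a single point $p\in X$. For general $R=R^{(1)}\sqcup\dots\sqcup R^{(s)}$ with distinct supports $p_1,\dots,p_s$, I will construct families $\ccR^{(i)}$ for each piece over a common curve $T$. I then take their union: near $0$ the fibres stay pairwise disjoint because the supports vary continuously, so after shrinking $T$ around $0$ the union is again finite and flat, with fibre degree equal to the sum of the degrees. To get a common base, I will make all the individual families pulled back from one fixed translation mechanism, or simply use $T=\mathbb{A}^1$ minus finitely many points. The condition $R_t\subset U$ also only has to hold away from finitely many $t$, and these can be removed from $T$.

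For the local case I will move $R$ by ``translation'' in local coordinates. Since $X$ is smooth at $p$ of dimension $n$, choose an affine neighbourhood $V\ni p$ and an étale map $\phi\colon V\to \mathbb{A}^n$ with $\phi(p)=0$, given by local parameters. $R$ is supported at $p$, and $\phi$ is étale at $p$, so $\phi$ maps $R$ isomorphically onto its image $\phi(R)\subset \mathbb{A}^n$. In $\mathbb{A}^n$ I take the family $\ccR'=\{(x+tv,t)\}\subset \mathbb{A}^n\times\mathbb{A}^1$, the translate of $\phi(R)$ by $tv$ for a chosen vector $v$. This is isomorphic to $\phi(R)\times \mathbb{A}^1$, hence finite and flat. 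To lift it back to $X$, consider $V\times_{\mathbb{A}^n}\ccR'$. It is étale over $\ccR'$, and it contains a copy of $R$ over $t=0$, namely the component through $(p,0)$.

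The key lemma I will need is that, since $\ccR'$ is a product of a local Artinian scheme with a curve, the étale neighbourhood splits near $t=0$. Concretely, after replacing $\mathbb{A}^1$ by an étale neighbourhood (a curve $T$ with a point $0$), there is a section-like closed subscheme $\ccR\subset V\times T$ mapping isomorphically to $\ccR'\times_{\mathbb{A}^1}T$. The cleanest route I see is Hensel/étale lifting: $\ccR'\to \mathbb{A}^1$ is finite, so the connected component of $V\times_{\mathbb{A}^n}\ccR'$ containing $R$ is étale over $\ccR'$ and an isomorphism over $t=0$. An étale map of finite flat $T$-schemes that is an isomorphism on the special fibre is an isomorphism over a neighbourhood of $0$, possibly after an étale base change. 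This gives a finite flat family $\ccR\to T$ in $X$ with $R_0=R$.

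Finally I have to arrange $R_t\subset U$ for $t\neq0$. The support of $R_t$ is the single point $\phi^{-1}(tv)$ on the branch through $p$, and this traces an irreducible curve $C$ through $p$ in $X$. If $C\not\subset X\setminus U$, then $C\cap(X\setminus U)$ is finite, and I remove the corresponding $t$ from $T$. So I need a direction $v$ for which the curve is not contained in the proper closed set $Z=X\setminus U$, which has dimension less than $n$. The curves $\phi^{-1}(\text{line through }0 \text{ in direction }v)$ sweep out a neighbourhood of $p$, so a general $v$ works. If $Z$ contained all of them near $p$, then $Z$ would contain an open neighbourhood of $p$, contradicting density of $U$. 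The main obstacle is the étale-lifting step, i.e.\ making rigorous that the translated scheme lifts from $\mathbb{A}^n$ to $X$ as a flat family. If that proves awkward, my fallback is to use Hilbert schemes: the family of translates gives a morphism $\mathbb{A}^1\to \operatorname{Hilb}(\widehat{\mathbb{A}^n})$, and I would instead embed the problem into $\operatorname{Hilb}_d(X)$ and take a curve through $[R]$ meeting the open set of schemes contained in $U$.
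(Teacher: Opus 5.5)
Your proposal is essentially the paper's proof: \'etale coordinates $\phi$ at the support point, translation of $\phi(R)$ along a line $L_v$, base change to a curve, discarding finitely many parameters, and assembling the local pieces over a common base (your fibre product over the translation parameter plays the role of the paper's curve in $T_1\times\dots\times T_k$ avoiding $D$); your dimension argument for a general $v$ and your check that the pieces stay disjoint near $0$ are, if anything, more careful than the paper. The one thing to repair is your justification of the lifting, which the paper simply calls straightforward: the connected component of $V\times_{\AAA^n}\ccR'$ through $R$ need be neither finite over $\AAA^1$ nor an isomorphism over $t=0$, and $T=\AAA^1$ minus points does not suffice in general, so instead take $T$ to be an open piece of the branch of $\phi^{-1}(L_v)$ through $p$ (exactly the paper's $T$), over which the diagonal is an open and closed section of the reduced \'etale cover and hence lifts uniquely to the nilpotent thickening $T\times_{\AAA^1}\ccR'\cong T\times\phi(R)$, giving a closed subscheme of $T\times X$ that is finite and flat over $T$.
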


\begin{prf}
   We first prove the claim when $R$ is local, that is,  its support is a single
   $\kk$-point $x\in X$.
   Without loss of generality,
     replacing $X$ by the connected component containing~$x$,
     we may suppose
     $X$ is connected, and let $n=\dim X$.
   We may also suppose $X$ is affine by replacing it by any of its open affine subsets containing $x$ (and $U$ is replaced by the intersection with this open affine subset).
   Since $X$ is smooth at $x$ there exists a dominant map
   $\phi\colon X\to \AAA^n$ (defined by the local coordinate functions near $x$)
   such that $\phi(x)=0\in \AAA^n$ and $\phi$ is etale at $x$.
   In particular, $\phi|_{R}$ is an isomorphism onto its image $\phi(R)\subset \AAA^n$.
   Again replacing $X$ by its open subset, we may assume in addition that $\phi$ is etale on $X$.
   Note that $\phi(U)$ is open by
   \cite[\href{https://stacks.math.columbia.edu/tag/03WT}{Lemma 03WT}]{stacks_project}.
   Pick any $\kk$-point $v\in \phi(U)$ considered as a vector in $\kk^n$,
   let $\tau \colon \GG_a \times \AAA^n \to \AAA^n$ be the translation action by multiples of $v$:
   \[
     \tau (\lambda, p):= p + \lambda v.
   \]
  Let $\ccR' \simeq \GG_a\times \phi(R)$, that is $\ccR'$ is a flat family of subschemes of $\AAA^n$ with each $R'_{\lambda}$ isomorphic to $R$, supported at $\lambda v$ and being the translation of $\phi(R)$ by $\lambda v$.

  Let $T' = \phi^{-1} (\tau(\GG_a \times \set{0}))$
    be the preimage of the curve (line)
    $\tau(\GG_a \times \set{0})\subset \AAA^n$.
  Note that since $\phi$ is etale,
     $T'$ is a smooth curve, and it intersects $U$ at a preimage of $v$.
  Thus the intersection $T'\cap (X\setminus U)$ is a finite collection of points.
  Exclude $x$ from this collection, and set $T\subset T'$ to be the open subset avoiding this collection.
  Now let $\ccR$ be the pullback of $\ccR'$ along the map $T\to \GG_a$ and let $0:=x \in T\subset X$.
  It is straightforward to verify that $\ccR$ has the desired properties, and thus concluding the proof when $R$ is local.

  Now suppose $R = R_1\sqcup \dotsb \sqcup R_k$
  with each $R_i\subset X$ a finite local subscheme supported at $x_i$.
  We find the curves $T_i$, each with a point $0_i\in T_i$ and a family $\ccR_i \to T_i$, as in the conclusion of the proposition for each local component $R_i$.
  Consider the product $S:=T_1\times \dotsb \times T_k$ with
  $0 = (\fromto{0_1}{0_k}) \in S$.
  Let
  \[
    D := \bigcup_{i =1}^{k} \Bigl(T_1\times \dotsb \times  T_{i-1}\times \set{0_i}\times T_{i+1}\times \dotsb \times T_k\Bigr)\subset S.
  \]
  Since $T_i$ and $S$ are quasiprojecive,
    we can find a curve $T'\subset S$ such that
    $0\in T'$ and $T'$ is not contained in~$D$.
  Similarly to the local case, by getting rid of redundant points of intersection,
  pick $T\subset T'$  to be an open subset such that $T\cap D = \set{0}$ (perhaps with some multiplicity).
  We have the restricted projection morphisms $T\to T_i$, and we can define
  Let $\ccR\subset T\times X$ to be the union of the pullbacks of the families $\ccR_i$.
  Again, this $T$ and $\ccR$ satisfies the conclusions of the proposition.
\end{prf}

\begin{lemma}\label{lem_bundles_are_trivial}
   Suppose $R$ is a scheme, which is either:
   \begin{itemize}
    \item a local scheme, or
    \item a finite scheme.
   \end{itemize}
   Moreover, suppose $\ccE$ is a vector bundle of rank $k$ on $R$.
   Then $\ccE$ is trivial, that is $\ccE \simeq \ccO_R^{\oplus k}$.
\end{lemma}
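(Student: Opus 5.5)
The plan is to reduce both cases to the statement that a finitely generated projective module over a local ring is free, and in the finite case to glue along the finitely many connected components.

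\emph{Local case.} Here $R=\operatorname{Spec} A$ with $(A,\mathfrak m)$ a local ring; more generally $R$ is any scheme with a unique closed point $x$. Since every open subset containing $x$ is all of $R$, the only open neighbourhood of $x$ is $R$ itself. A vector bundle is locally trivial, so $\ccE$ is trivial on some open neighbourhood of $x$, hence on all of $R$. If one prefers the algebraic route for affine $R$: $\ccE$ corresponds to a finitely generated projective $A$-module $P$. Choose elements of $P$ whose images form a basis of $P/\mathfrak m P$. By Nakayama's lemma they generate $P$, which gives a surjection $A^{k}\to P$. This surjection splits because $P$ is projective. Its kernel $K$ is then a direct summand, so it is finitely generated and satisfies $K/\mathfrak m K=0$. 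By Nakayama again $K=0$, and therefore $P\simeq A^{k}$.

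\emph{Finite case.} A finite scheme over $\kk$ is $R=\operatorname{Spec} A$ with $A$ a finite-dimensional $\kk$-algebra, hence Artinian. By the structure theorem for Artinian rings, $A\simeq A_1\times\dotsb\times A_s$ with each $A_i$ local. So $R=R_1\sqcup\dotsb\sqcup R_s$ is a disjoint union of local schemes, each open and closed in $R$. By the local case, $\ccE|_{R_i}\simeq\ccO_{R_i}^{\oplus k}$ for each $i$; the rank is the same $k$ on every component because $\ccE$ has rank $k$. Since the $R_i$ are open and closed and pairwise disjoint, these trivialisations glue uniquely to an isomorphism $\ccE\simeq\ccO_R^{\oplus k}$.

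The only point needing care is the first case when $R$ is a local scheme that is not assumed affine. This is handled by the observation that every open set containing the closed point is all of $R$. The rest is standard commutative algebra (Nakayama's lemma and the structure of Artinian rings), so I expect no real obstacle.
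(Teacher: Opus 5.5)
Your proof is correct and follows essentially the paper's route: for a local scheme every open neighbourhood of the closed point is the whole scheme, so local triviality already gives global triviality, and a finite scheme splits into finitely many open-and-closed local components on which the trivialisations glue (your Nakayama argument is just an algebraic restatement of the local case). One caveat concerns your aside about \emph{any} scheme with a unique closed point: the claim that every open set containing $x$ is all of $R$ needs every point to specialise to $x$, which holds for $\operatorname{Spec}$ of a local ring or under quasi-compactness but can fail for non-quasi-compact schemes, so you should keep to the standard meaning of a local scheme.
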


\begin{proof}
   First suppose $R$ is local, in particular, it has only one closed point $p\in R$ and any (non-empty) closed subscheme of $R$ contains $p$.
   Let $U \subset R$ be any open neighbourhood of $p$.
   Then the complement of $U$ is a closed subscheme which does not contain $p$ thus the complement is empty and $U = R$.
   The locally trivial sheaf $\ccE$ is trivial on some open neighbourhood of $p$, hence it is trivial on $R$.

   Now if $R$ is finite (or more generally a discrete union of local schemes),
     then the restriction of $\ccE$ to each connected component $R'\subset R$
     is trivial, and isomorphic to $\ccO_{R'}^{\oplus k}$.
   The trivialisations can be glued together to $\ccE\simeq \ccO_{R}^{\oplus k}$.
\end{proof}

\begin{lemma}\label{lem_exists_B_prime}
   Suppose $R$ is a finite scheme of length $r$, $B$ is a vector space, and $\ccE$ is vector bundle of rank $k$ on $R$.
   Assume $\varphi \colon \ccE \hookrightarrow \ccO_R\otimes B$ is an embedding of vector bundles on $R$.
   Then there exists a linear subspace $B' \subset B$ of dimension at most $k \cdot r$ such that $\varphi$ factorises
     $\ccE \hookrightarrow \ccO_R \otimes B'\hookrightarrow \ccO_R\otimes B$.
\end{lemma}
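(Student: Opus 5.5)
By Lemma~\ref{lem_bundles_are_trivial}, $\ccE$ is trivial, so we fix an isomorphism $\ccE \simeq \ccO_R^{\oplus k}$. Let $e_1,\dotsc,e_k$ be the corresponding global sections of $\ccE$. Since $R$ is finite, it is affine: $R=\operatorname{Spec} A$ with $A=H^0(R,\ccO_R)$ a $\kk$-algebra of dimension $r$. Global sections then turn the morphism $\varphi$ into an injective homomorphism of $A$-modules
\[
A^{\oplus k}\to A\otimes_\kk B.
\]
Each image $\varphi(e_j)$ is an element of $A\otimes B$. Choose a $\kk$-basis $a_1,\dotsc,a_r$ of $A$; then we can write uniquely $\varphi(e_j)=\sum_{i=1}^r a_i\otimes b_{ij}$ with $b_{ij}\in B$.

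Set $B':=\linspan{b_{ij}}$, meaning the linear span inside $B$ of these $k\cdot r$ vectors. Then $\dim B'\leqslant k\cdot r$. Every $\varphi(e_j)$ lies in $A\otimes B'$, and $A\otimes B'$ is an $A$-submodule of $A\otimes B$. The $e_j$ generate $A^{\oplus k}$ as an $A$-module, so the whole image of $\varphi$ lies in $A\otimes B'$. Since $R$ is affine, quasi-coherent sheaves on $R$ correspond to modules, so $\varphi$ factors as $\ccE\to\ccO_R\otimes B'\to\ccO_R\otimes B$. The second map is induced by the inclusion $B'\subset B$ and is a split injection of trivial bundles. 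The first map is injective because the composite $\varphi$ is.

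The only delicate step is the meaning of \emph{embedding of vector bundles}, which is usually stronger than injectivity of sheaves: one wants the cokernel to be locally free, equivalently injectivity on every fibre. We check this for $\ccE\to\ccO_R\otimes B'$. For each closed point $p$, the fibre map $\ccE(p)\to B'$ composed with the inclusion $B'\subset B$ equals $\varphi(p)$, which is injective. Hence $\ccE(p)\to B'$ is injective. A map of vector bundles on a Noetherian scheme that is injective on all fibres at closed points is a subbundle, with locally free cokernel. On a local Artinian ring this follows from Nakayama's lemma: the map splits because a basis of the image extends to a basis of the free module. So $\ccE\hookrightarrow\ccO_R\otimes B'$ is an embedding of vector bundles, which proves the lemma.

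Finally, we can sharpen the choice of $B'$, although the bound above is all the lemma asks for. A canonical choice is the smallest $B'$ such that $A\otimes B'$ contains $\varphi(H^0(\ccE))$. This is the image of the map $A^\vee\otimes H^0(\ccE)\to B$ obtained by contracting with $\varphi$. Its dimension is at most $\dim_\kk H^0(\ccE)=k\cdot r$, because in the description above the $b_{ij}$ already span it.
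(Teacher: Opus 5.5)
Your proof is correct and follows the same outline as the paper's: trivialise $\ccE$ via Lemma~\ref{lem_bundles_are_trivial}, produce a subspace of dimension at most $r$ for each summand $\ccO_R$, and add these subspaces together. The only difference is how the per-summand bound is obtained. The paper works geometrically, projectivising to a morphism $R\to\PP B$ whose image is a finite scheme of length at most $r$ and so spans at most a $\PP^{r-1}$. Your expansion $\varphi(e_j)=\sum_i a_i\otimes b_{ij}$ computes that same span directly, and it uses fibrewise injectivity only in your explicit subbundle check, which the paper leaves implicit.
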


\begin{proof}
   The vector bundle $\ccE$ is trivial by Lemma~\ref{lem_bundles_are_trivial}, since $R$ is a finite scheme.
   First we prove the statement for $k=1$, that is $\ccE \simeq \ccO_R$.
   Passing to the projectivisations (which is possible, since $\varphi$ is an embedding) we get a section $R \to R \times \PP B$.
   Consider the image under the projection to $\PP B$.
   The image of the composed map is a finite subscheme in $\PP B$ of length at most $r$.
   Therefore its linear span $\PP (B') \subset \PP B$ is a (projective) linear subspace of dimension at most $r-1$, that is $\dim B'\le r$,
      and $\varphi$ factorises as claimed in the lemma.

   If $k>1$ is arbitrary, that is $\ccE\simeq \ccO_R^{\oplus k}$, then for each copy of $\ccO_R$ we have a vector space of dimension at most $r$.
   Taking as $B'$ the algebraic sum of these vector space we obtain the claim.
\end{proof}

Let $A$ and $B$ be vector spaces.
On the projective space $\PP(A\otimes B)$ we have the natural map
     $\phi\colon\ccO_{\PP (A\otimes B)} \otimes A^* \to \ccO_{\PP (A\otimes B)}(1) \otimes B$
     of vector bundles.
Explicitly, in coordinates, suppose $\set{\alpha_i}$, $\set{b_j}$, $\set{\beta_j}$ and $\set{\gamma_{ij}}$ are,  respectively, bases of $A^*$, $B$, $B^*$, and $A^*\otimes B^*$.
Suppose further, that $\set{\beta_j}$
  is the dual basis to $\set{b_j}$
  and $\gamma_{ij}= \alpha_i\otimes \beta_j$.
Then the matrix of $\phi$ in bases
$\set{\alpha_i}$ of $A^*$ and $\set{b_j}$ of $B$ is the matrix $(\gamma_{ij})$ of linear forms on $\PP(A\otimes B)$.
In a coordinate free way,
  over a point $[T]\in\PP(A\otimes B)$ for $\alpha\in A^*$,
  up to rescaling
  (which is possible to achieve consistently due to the twist by
    $\ccO_{\PP(A\otimes B)}(1)$),
  we have $\phi_{[T]}(\alpha) = \alpha \hook T \in B$,
  where $\hook$ denotes the tensor contraction
    $A^*\otimes (A\otimes B) \stackrel{\hook}{\longrightarrow} B$.

\begin{lemma}\label{lem_PAotimesBprime}
    Suppose $R$ is any scheme with a morphism  $f \colon R\to \PP(A\otimes B)$ and suppose $B' \subset B$ is a linear subspace.
    Then the following two conditions are equivalent:
    \begin{itemize}
      \item  the scheme-theoretic image of $f$ is contained in $\PP(A \otimes B')$,
      \item the pullback  $f^*\phi \colon \ccO_{R} \otimes A^* \to f^*\ccO_{\PP (A\otimes B)}(1) \otimes B$
            factorises through
            $\ccO_{R} \otimes A^* \to f^*\ccO_{\PP (A\otimes B)}(1) \otimes B'  \to   f^*\ccO_{\PP (A\otimes B)}(1) \otimes B$.
    \end{itemize}
\end{lemma}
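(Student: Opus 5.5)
The statement is local on $R$, so I will reduce it to a computation in coordinates on an open affine cover and then glue. Choose bases $\set{a_i}$ of $A$ and $\set{b_j}$ of $B$, with $B'$ spanned by the first $b'$ basis vectors $b_1,\dots,b_{b'}$. Then $\PP(A\otimes B')\subset\PP(A\otimes B)$ is the linear subspace cut out by the coordinates $\gamma_{ij}=\alpha_i\otimes\beta_j$ with $j>b'$. By the definition of scheme-theoretic image, the first condition says exactly that the ideal sheaf of $\PP(A\otimes B')$ pulls back to zero. Equivalently, $f^*\gamma_{ij}=0$ in $H^0(R, f^*\ccO(1))$ for all $i$ and all $j>b'$. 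The ideal is generated by these linear forms, and the scheme-theoretic image is the smallest closed subscheme through which $f$ factors. Since the image is the closed subscheme cut out by the kernel of $\ccO_{\PP}\to f_*\ccO_R$, containment in $\PP(A\otimes B')$ is equivalent to these forms lying in that kernel, that is, to their pullbacks vanishing.

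Next I will translate the second condition into the same terms. In the chosen bases the map $f^*\phi$ is given by the matrix $(f^*\gamma_{ij})$ of sections of $f^*\ccO(1)$. Concretely, $f^*\phi(\alpha_i)=\sum_j f^*\gamma_{ij}\otimes b_j$. The subbundle $f^*\ccO(1)\otimes B'\subset f^*\ccO(1)\otimes B$ is the direct summand spanned by $b_1,\dots,b_{b'}$, so the projection onto the complementary summand $f^*\ccO(1)\otimes(B/B')$ is well defined. The map $f^*\phi$ factors through the summand if and only if its composite with this projection vanishes. That composite sends $\alpha_i\mapsto\sum_{j>b'} f^*\gamma_{ij}\otimes \bar b_j$. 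It is zero exactly when $f^*\gamma_{ij}=0$ for all $i$ and all $j>b'$, because the $\alpha_i$ generate the free module $\ccO_R\otimes A^*$ and the $\bar b_j$ form a basis of $B/B'$. This is the same condition as in the first step, which proves the equivalence.

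The only delicate point is making the identification of $\phi$ with the matrix $(\gamma_{ij})$ precise after pulling back, including the twist. It helps to note that the composite $\ccO\otimes A^*\to\ccO(1)\otimes B\to\ccO(1)\otimes B/B'$ is itself the analogous map $\phi$ for the quotient map $\PP(A\otimes B)\dashrightarrow\PP(A\otimes B/B')$, whose entries are precisely the linear forms vanishing on $\PP(A\otimes B')$. Once that is clear, the rest is formal: a morphism into a linear subspace is the same as the vanishing of the defining linear forms pulled back. No finiteness of $R$ is needed.
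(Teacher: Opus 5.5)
Your proof is correct and is essentially the paper's argument: both reduce each condition to the vanishing of the pulled-back linear forms that cut out $\PP(A\otimes B')$. The paper writes this coordinate-free as $f^*(A^*\otimes (B')^\perp)=0$, i.e.\ $(B')^\perp$ annihilates the image of $f^*\phi$, whereas you choose bases adapted to $B'$ and test against the projection to $B/B'$, whose dual is exactly $(B')^\perp$. One harmless imprecision: describing the scheme-theoretic image via the kernel of $\ccO_{\PP}\to f_*\ccO_R$ literally requires $f$ quasi-compact, but your characterisation as the smallest closed subscheme through which $f$ factors already suffices.
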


\begin{proof}
    Consider $(B')^{\perp} = \set{\beta\mid \forall_{b'\in B'} \ \beta(b')=0} \subset B^*$.
    Then $A^*\otimes(B')^\perp \subset A^* \otimes B^*$ is the set of linear equations of $\PP(A\otimes B')\subset \PP (A\otimes B)$ and it generates the homogeneous ideal of this linear subspace.
    The scheme-theoretic image of $f$ is contained in
       $\PP(A \otimes B')$ if and only if $f^*(A^*\otimes(B')^\perp) = 0$.
    This vanishing occurs
       if and only if
       for all $\alpha\in A^*$ and $\beta\in (B')^\perp$
       one has $f^*(\alpha\otimes \beta) =0$.
     Equivalently, for all $\alpha\in A^*$ and $\beta\in (B')^\perp$
       we have $\beta\bigl((f^*\phi)(\alpha)\bigr)=0$,
       or more briefly $(B')^{\perp} (f^*\phi) =0$.
      The last vanishing happens if and only if the image of  $f^*\phi$ is contained in $f^*\ccO_{\PP (A\otimes B)}(1) \otimes B'$
       as claimed.
\end{proof}

\section{Cacti}
\label{sec_cacti}

Suppose $X\subset \PP(W)$ is a locally closed (hence quasiprojective) subscheme.
The main interest is in the case $X$ is a smooth projective variety. For technical reasons and arguments we also need the case when $X$ is not closed in $\PP(W)$, only locally closed.
Furthermore, some definitions and intermediate results are also valid in the more general setting of singular varieties and even quasiprojective schemes.

Recall, that for projective variety $X$ the \emph{Grassmann secant variety} is:
\[
  \sigma_{r,i}:=\overline{\set{E\in \Gr(i, W) \mid
    \exists \fromto{x_1}{x_r} \in X \text{ s.t.}  E \subset \linspan{\fromto{x_1}{x_r}} }}.
\]

\begin{defin}
\label{def_Grassmann_cactus_variety}
Fix integers $r$ and $i$.
The $r$-th \emph{Grassmann cactus variety} of $X$ is the closed subset $\cactus{r,i}{X} \subset \Gr(i,W)$ defined by:
\[
 \cactus{r,i}{X} := \overline{\set{E\in \Gr(i, W) \mid
    \exists \text{ subscheme } R\subset X \text{ s.t.} \dim R = 0, \deg R \leqslant r, E \subset \linspan{R} }}.
\]
The \emph{generic Grassmann cactus variety} $\ocactus{r,i}{X}$ of $X$ is:
\[
 \ocactus{r,i}{X} := \bigcap_{U\subset X} \cactus{r,i}{U}
\]
where the intersection is over all dense Zariski open subsets of $X$.
\end{defin}

In particular, if $i=1$ then $\cactus{r,i}{X} = \cactus{r}{X}$ is the cactus variety defined in \eqref{equ_define_cactus}.

\begin{thm}
   \label{thm_generic_cactus_and_smooth_locus}
   Suppose $X\subset \PP (W)$ is as above and $X_0\subset X$ is the
   smooth locus of $X$. Then:
   \[
     \ocactus{r,i}{X} = \cactus{r,i}{X_0}.
   \]
   In particular,
   if $X$ is smooth then for any open dense
   $U\subset X$ we have $\cactus{r,i}{U} =\cactus{r,i}{X}$.
\end{thm}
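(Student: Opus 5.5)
We prove the two inclusions $\ocactus{r,i}{X} \subset \cactus{r,i}{X_0}$ and $\cactus{r,i}{X_0}\subset \ocactus{r,i}{X}$ separately.

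\emph{First inclusion.} Assume $X_0$ is nonempty; in the cases of interest it is open and dense in $X$. Then $X_0$ is itself one of the dense open subsets $U$ over which the intersection defining $\ocactus{r,i}{X}$ is taken, so this inclusion is immediate. If $X$ is reducible or non-reduced, the intended meaning of ``smooth locus'' has to be checked against this; I take $X_0$ to be open and dense.

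\emph{Second inclusion.} Fix a dense open $U\subset X$. Then $U_0:=U\cap X_0$ is open and dense in the smooth quasiprojective scheme $X_0$, and $\cactus{r,i}{U_0}\subset\cactus{r,i}{U}$. It therefore suffices to prove $\cactus{r,i}{X_0}\subset \cactus{r,i}{U_0}$. Since the right-hand side is closed, it is enough to show that every $E\in\Gr(i,W)$ with $E\subset\linspan{R}$, for some finite $R\subset X_0$ of degree at most $r$, lies in $\cactus{r,i}{U_0}$. Apply Proposition~\ref{prop_moving_scheme_into_generic_position} to $X_0$, the open dense subset $U_0$, and $R$. This gives a curve $T$, a point $0\in T$, and a finite flat family $\ccR\to T$ with $R_0=R$ and $R_t\subset U_0$ for $t\neq 0$. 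Flatness makes the degree constant, equal to $\deg R\le r$.

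The key step, and the main obstacle, is to pass the linear spans to the limit. I would consider the restriction map $\pi_*\ccO_{\ccR}(1)$. Here $\pi_*\ccO_{\ccR}(1)$ is locally free of rank $d=\deg R$ on $T$, and we have the evaluation map $\ccO_T\otimes W^*\to \pi_*\ccO_{\ccR}(1)$. The fibre of its image over $t$ is $W^*/I(R_t)_1$, whose annihilator is $\hat{\linspan{R_t}}$. Near $0$ the image is a coherent subsheaf of a vector bundle on a smooth curve, hence torsion-free and locally free. Its rank equals the generic value of $\dim \linspan{R_t}+1$, and this rank can only drop at special points. Taking the saturation $\ccF$ of the kernel, we get a subbundle of $\ccO_T\otimes W^*$, hence a morphism $T\to \Gr$ sending $t\mapsto L_t$, where $L_t=\hat{\linspan{R_t}}$ for generic $t$. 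Since the kernel is contained in its saturation, $\ccF_0\supset$ the limit of the kernels, and the evaluation at $0$ gives $L_0\subset \hat{\linspan{R_0}}$. However, we need the opposite containment, $\hat{\linspan{R_0}}\subset L_0$ (up to dimension), and this may fail when the span of the special fibre is larger.

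So I would instead fix the dimension differently. I would choose any $d$-dimensional space $\Lambda_t\supset \hat{\linspan{R_t}}$ defined in families: take the dual of the rank-$d$ bundle $\pi_*\ccO_{\ccR}(1)$, which maps to $\ccO_T\otimes W$. The image of its fibre at $t$ is exactly $\hat{\linspan{R_t}}$. Saturating this image gives a subbundle of some rank $s$ whose fibres contain the images; in particular the fibre at $0$ contains the image at $0$. This seems to go the wrong way again, since saturation makes the limit fibre larger only for the image of the map, not its kernel. In fact, the saturated image $S_t$ has rank $s=\dim\hat{\linspan{R_t}}$ generically, and $S_0$ is a limit of the spans $\hat{\linspan{R_t}}$ which contains $\hat{\linspan{R_0}}$. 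Good: this is the correct direction. Hence $E\subset \hat{\linspan{R_0}}\subset S_0$.

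Now pick a morphism $t\mapsto E_t\in \Gr(i,S_t)$ near $0$ with $E_0=E$. This is possible because the relative Grassmannian of the bundle $S$ is smooth over $T$, so the point $E$ extends to a local section, after possibly shrinking $T$ or passing to a curve through $E$ that dominates $T$. For $t\neq0$ we have $E_t\subset\linspan{R_t}$ with $R_t\subset U_0$ of degree at most $r$, so $E_t\in\cactus{r,i}{U_0}$. Since $E=\lim E_t$, closedness gives $E\in\cactus{r,i}{U_0}$. The final claim of the theorem follows by taking $X$ smooth, so that $X_0=X$ and $\cactus{r,i}{X}=\ocactus{r,i}{X}\subset\cactus{r,i}{U}\subset\cactus{r,i}{X}$.
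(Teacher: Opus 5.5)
Your proof is correct and follows the paper's argument. The first inclusion holds because $X_0$ is one of the dense open sets; you rightly note this needs $X_0$ dense, which the paper tacitly assumes. The second inclusion goes via Proposition~\ref{prop_moving_scheme_into_generic_position} applied to $R\subset X_0$ and $U\cap X_0$, then the principle that the span of the limit lies in the limit of the spans, then a locally constant choice of $E_t$ in the limiting bundle.

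The only difference is that where the paper cites \cite[Prop.~5.29]{jabu_jelisiejew_finite_schemes_and_secants} for that limit principle, you prove it directly by saturating the image of $(\pi_*\ccO_{\ccR}(1))^{\vee}\to\ccO_T\otimes W$. That argument is valid, though you should delete the false starts, and write ``for general $t$'' rather than ``for $t\neq 0$'' when asserting $E_t\subset\linspan{R_t}$.
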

\begin{prf}
   Since $X_0 \subset X$ is an open subset, by definition we have 
   $\ocactus{r,i}{X} \subset \cactus{r,i}{X_0}$.
   Similarly to \eqref{equ_define_plain_cactus}, define:
   \[
    \cactusop{r,i}{X_0} := \set{E\in \Gr(i, W) \mid
    \exists \text{ subscheme } R\subset X_0 \text{ s.t.} \dim R = 0, \deg R \leqslant r, E \subset \linspan{R} }.
   \]
   Let $U \subset X$ be any open subset.
   Since $\cactus{r,i}{U}$ is closed and $\overline{\cactusop{r,i}{X_0}} = \cactus{r,i}{X_0}$, to prove the theorem it is enough to show that
   \[
     \cactusop{r,i}{X_0} \subset \cactus{r,i}{X_0\cap U}\subset \cactus{r,i}{U}.
   \]
   The second inclusion is clear. 
   To prove the first one,
   take any $E\in \cactusop{r,i}{X_0}$ so that there exists a finite subscheme $R\subset X_0$ of degree $r$ such that $E\subset \linspan R$.
   Consider a flat family of subschemes $\ccR\subset T \times X_0$ parametrised by a curve $T$ with a special point $0\in T$, such that special fibre is $R_0 = R$ and for general $t\in T$ the scheme $R_t$ is contained in $X_0\cap U$.
   This family exists
      by Proposition~\ref{prop_moving_scheme_into_generic_position}.
   Then we have:
   \[
      E \subset \linspan{R} = \linspan{\lim_{t\to 0} R_t}
        \subset \lim_{t\to 0} \linspan{R_t}.
   \]
   The last inclusion is the principle that
   \emph{the linear span of a limit is contained in the limit of the linear spans}.
   Although intuitively clear,
     a formal proof requires some work
   \cite[Prop.~5.29]{jabu_jelisiejew_finite_schemes_and_secants}.
   Note that the opposite inclusion may fail, as illustrated by famous example of three points in general position converging to three points on a line.

   Locally trivialising $\linspan{R_t}$ over general points $t$, we can find a subbundle $E_t\subset \linspan{R_t}$ of rank equal to $\dim E$, such that $E= \lim_{t\to 0} E_t$.
   Since each (general) $E_t$ is in $\cactus{r,i}{X_0\cap U}$, and since $\cactus{r,i}{X_0\cap U}$ is closed, it follows that also the limit $E$ is in the same cactus variety, concluding the proof.
\end{prf}

\section{Determinantal spines}

In this section we restrict to the cactus varieties
$\cactus{r}{X}= \cactus{r,1}{X}$
  as defined in \eqref{equ_define_cactus}, or equivalently,
  as in Definition~\ref{def_Grassmann_cactus_variety} for $i=1$.

\begin{thm}\label{thm_determinantal_cacti}
   Suppose $X\subset \PP(W)$ is a locally closed subscheme and $M$ is a matrix, whose entries are linear forms on $W$ (elements of $W^*$).
   If $X$ is contained in the locally closed locus
   \[
    \set{\rk M = k} = \set{\rk M \leqslant k} \setminus \set{\rk M \leqslant k-1} \subset \PP (W),
   \]
   then the cactus variety $\cactus{r}{X}$ is contained in the locus $ \set{\rk M \le k \cdot r}$.
\end{thm}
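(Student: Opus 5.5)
Since $\set{\rk M\leqslant k r}$ is closed, it suffices to prove $\cactusop{r}{X}\subset\set{\rk M\leqslant kr}$. So fix a finite subscheme $R\subset X$ of degree $r$ and a point $F\in\linspan{R}$; we must show $\rk M(F)\leqslant kr$. The plan is to interpret $M$ as a map of bundles and use the lemmas of Section~\ref{sec_finite_schemes}.

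\textbf{Setup.} Let $M$ be a $p\times q$ matrix of linear forms. Set $A^*:=\kk^q$ and $B:=\kk^p$, so that $M$ defines a linear map $\mu\colon W\to A\otimes B$, i.e.\ a linear map $W\to \Hom(A^*,B)$. Then $\rk M(w)$ is the rank of the contraction $\mu(w)\in A\otimes B$ viewed as a map $A^*\to B$. Pulling back the tautological map $\phi$ along $\mu$ (after restricting away from the base locus) gives, on $\PP(W)$, a map
\[
\psi\colon\ccO\otimes A^*\to\ccO(1)\otimes B,
\]
whose fibre at $[w]$ is $M(w)$.

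\textbf{Image bundle on $R$.} Restrict $\psi$ to $R$. Since $X\subset\set{\rk M=k}$ scheme-theoretically (I would take the hypothesis in the sense that the $(k+1)$-minors vanish on $X$ while some $k$-minor is invertible at each point), $\psi|_R$ has constant rank $k$. Its image $\ccE$ is therefore a rank $k$ subbundle of $\ccO_R(1)\otimes B$. This uses the standard fact that a map of free modules over a local ring whose $(k+1)$-minors vanish and some $k$-minor is a unit has free image and cokernel of complementary rank. This is exactly where the strict condition $\rk M=k$, rather than $\leqslant k$, is needed.

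\textbf{Bounding the rank at $F$.} By Lemma~\ref{lem_bundles_are_trivial}, $\ccO_R(1)\simeq\ccO_R$, so $\ccE\hookrightarrow\ccO_R\otimes B$. Lemma~\ref{lem_exists_B_prime} then gives $B'\subset B$ with $\dim B'\leqslant kr$ such that $\ccE\subset\ccO_R\otimes B'$. Consequently $\psi|_R$ factors through $\ccO_R(1)\otimes B'$. By Lemma~\ref{lem_PAotimesBprime}, applied to $f=\mu|_R\colon R\to\PP(A\otimes B)$, the scheme-theoretic image of $R$ lies in $\PP(A\otimes B')$. Hence the linear forms $A^*\otimes (B')^\perp$, pulled back to $W$, vanish on $R$, so they lie in $I(R)_1$ and thus vanish on $\linspan{R}$. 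Therefore $\mu(F)\in A\otimes B'$, which gives $\rk M(F)\leqslant\dim B'\leqslant kr$.

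\textbf{Obstacles.} The main obstacle is the technical point about the base locus of $\mu$: if $k\geqslant 1$ then $X$ avoids it, but $\linspan{R}$ need not. This is harmless, since at the end we only use linear forms vanishing on $\linspan{R}$, and the conclusion $\mu(F)\in A\otimes B'$ holds even when $\mu(F)=0$. The case $k=0$ is trivial: the linear forms in $M$ vanish on $X$, hence on $\linspan{R}$. The constant-rank flatness step above is the other point requiring care.
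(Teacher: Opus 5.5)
Your proposal is correct and follows the paper's proof essentially step for step. You view $M$ as a constant-rank-$k$ map of bundles $\ccO\otimes A^*\to\ccO(1)\otimes B$ on $R$, use Lemma~\ref{lem_exists_B_prime} to confine its image to $B'$ with $\dim B'\leqslant kr$, use Lemma~\ref{lem_PAotimesBprime} to get $\linspan{R}\subset\PP(\mu^{-1}(A\otimes B'))\subset\{\rk M\leqslant kr\}$, and then pass to the closure. Your extra care is also sound: reading the hypothesis scheme-theoretically (vanishing $(k+1)$-minors on $X$, which the paper uses implicitly when asserting $\phi$ has constant rank $k$, and which is the right reading for nonreduced $X$), trivialising $\ccO_R(1)$, and treating the base locus and $k=0$ explicitly.
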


By \cite[Rem.~7.5]{galazka_mgr_publ}, if we only assume $X \subset \set{\rk M \leqslant k}$ the claim $\cactus{r}{X} \subset \set{\rk M \le k \cdot r}$ may fail.
Namely, \cite[Ex.~7.4]{galazka_mgr_publ} shows a singular variety $X$, a matrix such that $X\subset \set{\rk M \leqslant 1}$
and a point $F\in \cactus{2}{X}$ such that $\rk M(F)=3$.
Similar examples may be constructed if $X$ is nonreduced.
However, if $X$ is smooth,
then in Theorem~\ref{thm_determinantal_cacti_for_smooth} and Corollary~\ref{cor_determinantal_cacti_for_smooth_disconnected}
we show that such examples cannot be constructed.

\begin{prf}[ of Theorem~\ref{thm_determinantal_cacti}]
   We may assume $k>0$ as otherwise the statement is vacuous.
   The $a \times b$ matrix $M$ determines a linear map $g\colon W \to A\otimes B$,
     where $A$ is an $a$-dimensional vector space and $B$ is a $b$ dimensional vector space.
   We denote by the same letter the projectivised restriction $g\colon X \to \PP (A\otimes B)$.
   Note that $g\colon X \to \PP (A\otimes B)$ is a regular map, since $M(x)$ is never $0$ for $x\in X$.

   On $\PP(A\otimes B)$ we have the natural map of vector bundles
   \[
     \ccO_{\PP (A\otimes B)} \otimes A^* \to \ccO_{\PP (A\otimes B)}(1) \otimes B
   \]
   as in Section~\ref{sec_finite_schemes}.
   We denote by $\phi\colon \ccO_{X} \otimes A^* \to \ccO_{X}(1) \otimes B$ the pullback of this map to $X$.
   Note that our assumptions imply that $\phi$ has constant rank $k$.
   In particular, its image $\ccE$ is a vector bundle on $X$.

   Let $R \subset X$ be a finite subscheme of length $r$.
   Denote by $f\colon R \to \PP(A\otimes B)$ the composition of embedding $R \subset X$ and $g$.
   Consider the restriction $\ccE|_{R}$ as a subbundle of $ f^*\ccO_{\PP (A\otimes B)}(1) \otimes B$.
   By Lemma~\ref{lem_exists_B_prime}, there exists a linear subspace of dimension at most $k \cdot r$
      such that $\ccE|_{R} \subset f^*\ccO_{\PP (A\otimes B)}(1) \otimes B'$.
   By Lemma~\ref{lem_PAotimesBprime} this implies that $f (R)\subset \PP(A\otimes B')$.

   Thus $\langle R \rangle$ is contained in the linear space $g^{-1} (\PP(A\otimes B')) \subset \PP(W)$.
   Next, since $\dim B'\le k\cdot r$, the linear space $g^{-1} (\PP(A\otimes B'))$ is contained in the locus  $\set{\rk M\le k \cdot r}$.
   Therefore
   \[
    \cactusop{r}{X} = \bigcup \set{\langle R \rangle \mid
     R\subset X, \dim R = 0, \deg R \leqslant r}
     \subset \set{\rk M\le k \cdot r},
   \]
      and since $\set{\rk M\le k \cdot r}$
      is closed the claim of the theorem follows:
   \[
      \cactus{r}{X} = \overline{\cactusop{r}{X}} \subset
      \overline{\set{\rk M\le k \cdot r}} =\set{\rk M\le k \cdot r}.
   \]
\end{prf}

Next we focus on the case of $X$ being smooth.
The main examples include Segre-Veronese varieties or Grassmannians.

\begin{thm}
   \label{thm_determinantal_cacti_for_smooth}
   Suppose $X\subset \PP(W)$ is a smooth connected quasiprojective variety and $M$ is a matrix of linear forms.
   If $X\subset \set{\rk M \leqslant k}$,
   then $\cactus{r}{X} \subset \set{\rk M \le k \cdot r}$.
\end{thm}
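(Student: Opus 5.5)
Let $k_0 := \max\set{\rk M(x) \mid x\in X}$, so that $k_0\leqslant k$. The plan is to reduce to Theorem~\ref{thm_determinantal_cacti} by passing to a dense open subset of $X$ on which the rank of $M$ is constant, and then use smoothness through Theorem~\ref{thm_generic_cactus_and_smooth_locus}. If $k_0=0$ the statement is trivial, since then $X\subset\set{\rk M =0}$, which is a linear subspace, and every $\linspan{R}$ for $R\subset X$ is contained in it. So assume $k_0>0$.

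Set $U := X\setminus \set{\rk M\leqslant k_0-1}$. This is Zariski open in $X$, because the lower rank locus is closed, and it is nonempty by the choice of $k_0$. Since $X$ is connected and smooth, it is irreducible, so $U$ is dense in $X$. By construction $U\subset\set{\rk M = k_0}$, and $U$ is a locally closed subscheme of $\PP(W)$. Theorem~\ref{thm_determinantal_cacti} applied to $U$ therefore gives
\[
  \cactus{r}{U}\subset \set{\rk M\leqslant k_0\cdot r}\subset \set{\rk M\leqslant k\cdot r}.
\]

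It remains to compare $\cactus{r}{U}$ with $\cactus{r}{X}$. Since $X$ is smooth, the last clause of Theorem~\ref{thm_generic_cactus_and_smooth_locus} (with $i=1$) gives $\cactus{r}{U}=\cactus{r}{X}$ for every open dense $U\subset X$, and this finishes the proof.

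The real content sits in that comparison step. Its proof moves an arbitrary finite subscheme of $X$ into $U$ by Proposition~\ref{prop_moving_scheme_into_generic_position}, and then uses that the span of a limit lies in the limit of the spans; both facts are already established earlier in the paper. The only points needing care here are the irreducibility of $X$, which uses connectedness plus smoothness (hence $X$ is normal, and a connected normal variety is irreducible), and the fact that $U$ is nonempty. This is also why connectedness is assumed: for disconnected $X$ the components could carry different generic ranks of $M$, and the argument would need to be applied componentwise.
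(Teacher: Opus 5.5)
Your proof is correct and follows the paper's argument. You restrict to the dense open subset of $X$ where $M$ attains its maximal rank, apply Theorem~\ref{thm_determinantal_cacti} there, and transfer the conclusion back to $X$ via Theorem~\ref{thm_generic_cactus_and_smooth_locus}; the only cosmetic difference is that the paper reaches that subset by induction on $k$ (stepping down while the locus where $M$ has rank exactly $k$ misses $X$), whereas you take the maximal rank $k_0$ directly.
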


\begin{prf}
  Argue by induction on $k$.
  If $k=0$, that is if $M|_X \equiv 0$,
    then for any point $p\in\linspan{X}$
    the evaluated matrix $M(p)$ is also $0$,
    because the entries of $M$ are linear.
  Clearly, $\cactus{r}{X}$ is contained in the linear span of $X$, hence the conclusion.

  For any $k>0$, let $U\subset X$ be the open subset
  $U:=\set{\rk M = k} \cap X$.
  If $U$ is empty,
    then $X\subset \set{\rk M \leqslant k-1}$,
    and by induction,
  $\cactus{r}{X}\subset \set{\rk M \le (k-1) \cdot r} \subset \set{\rk M \le k \cdot r}$.
  Thus we can assume $U\neq \emptyset$
  and therefore $U$ is dense in $X$.
  By Theorem~\ref{thm_generic_cactus_and_smooth_locus} and the definition of $\ocactus{r}{X}$ we have:
  \[
     \cactus{r}{X} = \ocactus{r}{X} = \ocactus{r}{U}
     = \cactus{r}{U}.
  \]
  By Theorem~\ref{thm_determinantal_cacti} we also have
  $\cactus{r}{U} \subset \set{\rk M \le k \cdot r}$.
  Combining the above, we get the claim $\cactus{r}{X} \subset \set{\rk M \le k \cdot r}$.
\end{prf}

In the final steps we generalise Theorem~\ref{thm_determinantal_cacti_for_smooth} to the case where $X$ is not connected.
First recall, that for two reduced subschemes $Y, Z\subset \PP(W)$,
their join $Y+Z\subset \PP(W)$ is defined as:
\[
 Y+Z := \overline{\bigcup\set{\linspan{y, z} \colon y\in Y, z\in Z}}.
\]
We choose the convention that $Y+\emptyset=\emptyset +Y =Y$
  (we treat $\emptyset\subset \PP (W)$
   as the projectivisation of $\set{0} \subset W$).
Moreover, consistently with the convention in Definition~\ref{def_ranks} we set $\cactus{0}{X}=\emptyset$.

\begin{lemma}
     \label{lem_cactus_as_join}
   Suppose $X_i\subset \PP(W)$ for $i=1, 2$
      are two locally closed subschemes
      and that $X_1\cap X_2 =\emptyset$.
   Let $X= X_1 \cup X_2 \subset \PP(W)$ and assume that $X_i \subset X$ is open for both $i=1, 2$.
   Then:
   \[
     \cactus{r}{X} = \bigcup_{q=0}^r  \Bigl(\cactus{q}{X_1}+\cactus{r-q}{X_2}\Bigr).
   \]
\end{lemma}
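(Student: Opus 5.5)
We prove the two inclusions separately, working first with the non-closed unions $\cactusop{r}{\cdot}$ and then taking closures.

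For ``$\subset$'', let $R\subset X$ be a finite subscheme of degree $r$. The sets $X_1$, $X_2$ are disjoint, open, and cover $X$, so each is also closed in $X$. Hence $R$ splits scheme-theoretically as $R=R_1\sqcup R_2$ with $R_i=R\cap X_i$ finite subschemes of $X_i$ of degrees $q$ and $r-q$. If $R=R_1\sqcup R_2$ with $R_1,R_2$ disjoint, then $I(R)=I(R_1)\cap I(R_2)$. Taking degree one parts gives $I(R)_1=I(R_1)_1\cap I(R_2)_1$, so
\[
\linspan{R}=\linspan{\linspan{R_1}\cup\linspan{R_2}} .
\]
Therefore every point of $\linspan{R}$ lies on a line joining a point of $\linspan{R_1}\subset \cactus{q}{X_1}$ to a point of $\linspan{R_2}\subset\cactus{r-q}{X_2}$, or lies in one of these two spans (this covers the degenerate cases $q=0$ or $q=r$ via the convention $Y+\emptyset=Y$). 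Thus $\cactusop{r}{X}\subset\bigcup_q\bigl(\cactus{q}{X_1}+\cactus{r-q}{X_2}\bigr)$. The right-hand side is a finite union of closed sets, so it is closed, and taking the closure gives the inclusion.

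For ``$\supset$'', fix $q$. Let $R_1\subset X_1$ and $R_2\subset X_2$ be finite subschemes of degrees $q$ and $r-q$. They are disjoint, so $R=R_1\sqcup R_2$ is a subscheme of $X$ of degree $r$. For $y\in\linspan{R_1}$ and $z\in\linspan{R_2}$, the line $\linspan{y,z}$ lies in $\linspan{R}\subset\cactusop{r}{X}$. Hence $\cactusop{q}{X_1}+\cactusop{r-q}{X_2}$ (before closure) is contained in $\cactus{r}{X}$.

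The main point is then that the join of the closures equals the closure of the join of the dense subsets. Let $J\subset \PP(W)\times\PP(W)\times\PP(W)$ be the closure of the incidence $\set{(y,z,p)\mid p\in\linspan{y,z},\ y\neq z}$; its projection to the third factor is closed because $\PP(W)$ is projective. Joining is continuous in the following sense: if $A'\subset A$ and $B'\subset B$ are dense, then every pair in $A\times B$ is a limit of pairs in $A'\times B'$. Consequently the lines over $A'\times B'$ are dense in the union of the lines over $A\times B$. To make this precise, take $y\in\cactus{q}{X_1}$, $z\in\cactus{r-q}{X_2}$ and $p\in\linspan{y,z}$. We use that $\cactusop{q}{X_1}\times\cactusop{r-q}{X_2}$ is dense in the product of closures, and then use closedness of the projection of $J$ restricted over the product of closures. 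This shows that $p$ lies in the closure of the points on lines between points of $\cactusop{q}{X_1}$ and $\cactusop{r-q}{X_2}$, hence $p\in\cactus{r}{X}$. The degenerate case $y=z$ gives $p=y$, which lies in the closure trivially. The case where one side is $\emptyset$ is handled by the convention.

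This density/closure step is the only delicate part; everything else follows from the decomposition of finite subschemes along the clopen cover.
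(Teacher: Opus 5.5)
Your proof is correct and follows the paper's argument for $\subset$: split $R=R_1\sqcup R_2$ along the clopen cover, use $\linspan{R}=\linspan{R_1}+\linspan{R_2}$, and note that the right-hand side is closed. For $\supset$, which the paper just calls clear, your density argument is sound, though it is cleaner to say that the addition map $\hat A\times\hat B\to W$ on affine cones is continuous, so it sends the closure of the dense subset $\hat{A'}\times\hat{B'}$ into the closure of its image (closedness of the projection of $J$ is not really the operative fact).
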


\begin{prf}
   The $\supset$ inclusion is clear.
   Also both sides of the equation are closed, hence it is enough to prove
   \[
    \cactusop{r}{X} =\bigcup \set{\langle R \rangle \mid
     R\subset X, \dim R = 0, \deg R \leqslant r} \subset \bigcup_{q=0}^r  \bigl(\cactus{q}{X_1}+\cactus{r-q}{X_2}\bigr).
   \]
   Thus it suffices to show that for each subscheme $R\subset X$
   we have $\langle R \rangle \subset \cactus{q}{X_1}+\cactus{r-q}{X_2}$ for some $q$.
   Indeed, let $R_i = R\cap X_i$ and let $q = \deg R_1$.
   Since $X_1$ and $X_2$ are disjoint, and $X_i\subset X$ are open we must have $\deg R_2= r-q$.
   Therefore $\linspan{R_1}\subset \cactus{q}{X_1}$
     and $\linspan{R_2}\subset \cactus{r-q}{X_2}$.
   Further:
   \[
      \linspan{R} = \linspan{R_1}+ \linspan{R_2} \subset \cactus{q}{X_1} + \cactus{r-q}{X_2}
   \]
   as claimed.
\end{prf}

\begin{cor}
   \label{cor_determinantal_cacti_for_smooth_disconnected}
   Suppose $X\subset \PP(W)$ is a smooth locally closed subscheme  and $M$ is a matrix of linear forms.
   If $X\subset \set{\rk M \leqslant k}$,
   then $\cactus{r}{X} \subset \set{\rk M \le k \cdot r}$.
\end{cor}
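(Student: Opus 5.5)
The plan is to reduce to the connected case, Theorem~\ref{thm_determinantal_cacti_for_smooth}, using the join decomposition of Lemma~\ref{lem_cactus_as_join}, and then to use that rank of a linear matrix is subadditive along joins.

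\emph{Decomposition.} Since $X$ is smooth and quasiprojective, it is Noetherian, so it has finitely many connected components $X^{(1)},\dots,X^{(m)}$. Each of them is open and closed in $X$, locally closed in $\PP(W)$, and is a smooth connected quasiprojective variety. We argue by induction on $m$. The case $m=0$ is trivial and the case $m=1$ is exactly Theorem~\ref{thm_determinantal_cacti_for_smooth}. For $m\geqslant 2$, set $X_1=X^{(1)}$ and $X_2=X^{(2)}\cup\dots\cup X^{(m)}$. These are disjoint and both are open in $X$. Lemma~\ref{lem_cactus_as_join} then gives
\[
  \cactus{r}{X}=\bigcup_{q=0}^r\Bigl(\cactus{q}{X_1}+\cactus{r-q}{X_2}\Bigr).
\]
Both $X_1$ and $X_2$ are smooth and contained in $\set{\rk M\leqslant k}$. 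By the connected case and the induction hypothesis, $\cactus{q}{X_1}\subset\set{\rk M\le kq}$ and $\cactus{r-q}{X_2}\subset\set{\rk M\le k(r-q)}$. With the convention $\cactus{0}{\cdot}=\emptyset$ and $Y+\emptyset=Y$, the terms with $q=0$ or $q=r$ are handled directly by these bounds.

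\emph{Joins.} It remains to check that if $Y\subset\set{\rk M\le a}$ and $Z\subset\set{\rk M\le b}$, then $Y+Z\subset\set{\rk M\le a+b}$. Take any $y\in Y$ and $z\in Z$. A point of $\linspan{y,z}$ is $[\lambda \hat y+\mu\hat z]$ for lifts $\hat y,\hat z$. Because the entries of $M$ are linear, $M(\lambda\hat y+\mu\hat z)=\lambda M(\hat y)+\mu M(\hat z)$, and rank is subadditive, so this matrix has rank at most $a+b$. Hence the union of the lines $\linspan{y,z}$ lies in $\set{\rk M\le a+b}$. That locus is closed, so it also contains the closure $Y+Z$.

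Applying this with $a=kq$ and $b=k(r-q)$ gives $\cactus{r}{X}\subset\set{\rk M\le kr}$. I expect no step to be a real obstacle. The only care needed is bookkeeping: confirming that the components are locally closed and open in $X$, so that Lemma~\ref{lem_cactus_as_join} applies, and handling the degenerate empty-set terms of the join.
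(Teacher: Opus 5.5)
Your proposal is correct and follows essentially the same route as the paper. Both arguments induct on the number of connected components, split $X$ into two disjoint open pieces to apply Lemma~\ref{lem_cactus_as_join}, and bound each join term using linearity of $M$, subadditivity of rank, and closedness of the rank locus.
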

\begin{prf}
   We argue by induction on the number $n$ of connected components of $X$.
   If $X$ is connected, that is $n=1$, then the claim is identical to Theorem~\ref{thm_determinantal_cacti_for_smooth}.
   If $n\geqslant 2$ then write $X= X_1 \sqcup X_2$ with both $X_i$ nonempty and open in $X$.
   Then by the inductive assumption
   $\cactus{q}{X_1} \subset \set{\rk M \le k \cdot q}$ and
   $\cactus{r-q}{X_2} \subset \set{\rk M \le k \cdot (r-q)}$ for any $q \in \setfromto{0}{r}$.
   For any points
   $F_1\in \cactus{q}{X_1}$ and $F_2 \in \cactus{r-q}{X_2}$,
   we have $\linspan{F_1, F_2} \subset \set{\rk M \le k \cdot r}$
   and hence $\cactus{q}{X_1}+\cactus{r-q}{X_2} \subset \set{\rk M \le k \cdot r}$.
   Therefore, the claim of the Corollary follows from Lemma~\ref{lem_cactus_as_join}.
\end{prf}

We rephrase Corollary~\ref{cor_determinantal_cacti_for_smooth_disconnected} as a barrier-type statement.

\begin{cor}
   Suppose $X\subset \PP(W)$ is smooth locally closed and nondegenerate (that is, not contained in any hyperplane),
   and let $g$ be an integer such that $\cactus{g}{X}=\PP (W)$.
   Assume $M$ be a matrix of linear forms.
   Let $k:=\max\set{\rk M(x) \mid x\in X}$.
   Then for every $p\in \PP(W)$ we have $\rk M(p)  \leqslant k \cdot g$.
\end{cor}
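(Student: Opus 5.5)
The plan is to deduce the statement directly from Corollary~\ref{cor_determinantal_cacti_for_smooth_disconnected}, with essentially no new input. Since $k$ is defined as the maximum of $\rk M(x)$ over $x\in X$, we have $X\subset\set{\rk M\leqslant k}$ tautologically. The maximum is well defined and finite: the ranks of the evaluated matrices are bounded by the size of $M$. If $X$ is empty, then nondegeneracy forces $W=0$ and there is nothing to prove. All hypotheses of Corollary~\ref{cor_determinantal_cacti_for_smooth_disconnected} then hold, because $X$ is smooth and locally closed and $M$ is a matrix of linear forms. That corollary gives $\cactus{g}{X}\subset\set{\rk M\leqslant k\cdot g}$.

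Next we use the assumption $\cactus{g}{X}=\PP(W)$. It yields $\PP(W)\subset\set{\rk M\leqslant k g}$, that is, $\rk M(p)\leqslant k g$ for every $p\in\PP(W)$. The rank of $M(p)$ does not depend on the representative of $p$ in $W\setminus\set{0}$, since the entries of $M$ are linear and rescaling does not change rank. So the statement makes sense projectively and the conclusion follows. Nondegeneracy is not actually used beyond guaranteeing that $\cactus{g}{X}=\PP(W)$ is possible for some $g$. We only need to remark that such a $g$ exists, for instance $g=\dim W$, since the span of $X$ is the whole space.

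There is no real obstacle here. The only point worth stating explicitly is the interpretation: the bound shows that any lower bound $\borderrank{F}\geqslant \rk M(F)/k$ produced by the linear rank method is at most $g$. This is the barrier formulation of Theorem~\ref{thm_cactus_barriers_intro}.
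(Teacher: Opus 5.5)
Your proposal is correct and is the paper's argument: the paper states this corollary as a direct rephrasing of Corollary~\ref{cor_determinantal_cacti_for_smooth_disconnected}, applied with $r=g$ to the inclusion $X\subset\set{\rk M\leqslant k}$ (which holds by the definition of $k$) and then combined with $\cactus{g}{X}=\PP(W)$. Your side remarks are accurate and harmless. These are the empty case, the independence of $\rk M(p)$ from the chosen representative, and the redundancy of nondegeneracy (which also follows from $\cactus{g}{X}\subset\linspan{X}$).
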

In particular, linear rank methods for bounding border rank cannot yield results beyond $r$ in which $\cactus{r}{X}$ fills in the ambient space.

Theorem~\ref{thm_cactus_barriers_intro} follows from Corollary~\ref{cor_determinantal_cacti_for_smooth_disconnected}, as the smooth locus $X_0$ will be a disconnected union of irreducible open subsets $X$.

\appendix
\section{Appendix: from sets to algebraic varieties}
\label{sec_appendix}

Suppose $\hat X \subset W$ is any subset of a vector space $W$ and for any $F\in W$ define:
\[
  r_{\hat X}(F):= \min \set{r \mid F \in \linspan{\fromto{x_1}{x_r}} \text{ for } x_i \in \hat{X}}
\]
analogously to Definition~\ref{def_ranks}.
In this appendix we briefly explain that the barriers to linear rank methods also apply to this setting, where $\hat X$ is not necessarily an algebraic set or variety.

Indeed, let $\kk^*\cdot \hat X\subset W$ be the set obtained as rescaling every element in $\hat{X}$.
Note that $r_{\hat X}(F)= r_{\kk^* \cdot \hat X}(F)$ for any $F\in W$.
Further let $X\subset \PP (W)$ be the set of projecive classes of elements in $\hat{X}$ (or equivalently, in $\kk^*\cdot \hat{X}$).
Finally, define $\overline{X}$ to be the Zariski closure of $X$ in $\PP(W)$.

Then for any matrix $M$ of linear forms on $W$ and any $k$ we have:
\[
   \hat{X}\subset \set{\rk M \leqslant k}
   \Leftrightarrow
   \kk^*\cdot \hat{X} \subset \set{\rk M \leqslant k}
   \Leftrightarrow
   X \subset \set{\rk M \leqslant k}
   \Leftrightarrow
   \overline{X} \subset \set{\rk M \leqslant k}.
\]
The first equivalence holds by linearity of entries of $M$,
the second is just descending to the projective space, and finally the third equivalence follows since $\set{\rk M \leqslant k}$ is Zariski closed.
But now $\overline{X}$ is algebraic set (reduced scheme), and we can apply the barrier theorems, for instance Theorem~\ref{thm_cactus_barriers_intro}. We conclude that using linear rank methods we will never get a better lower bound for $\hat{X}$-rank than generic cactus rank of $(\overline{X})_0$, the smooth locus of $\overline{X}$.

\bibliography{cactus_barriers.bbl}

\end{document}